\newcommand{\oset}[3][0ex]{%
  \mathrel{\mathop{#3}\limits^{
    \vbox to#1{\kern-2\ex@
    \hbox{\(\scriptstyle#2\)}\vss}}}}
\newcommand{\pp}{\mathbb{P}}
\newcommand{\zz}{\mathbb{Z}}
\renewcommand{\O}{\mathcal{O}}
\newcommand{\sH}{\mathscr{H}}
\newcommand{\et}{\text{\'et}}
\newcommand{\pr}{\text{pr}}
\newcommand{\Ext}{\operatorname{Ext}}
\newcommand{\Pic}{\operatorname{Pic}}
\newcommand{\rk}{\operatorname{rk}}
\newcommand{\defi}[1]{\textsf{#1}} 
\newtheorem{thm}{Theorem}[section]
\newtheorem{lem}[thm]{Lemma}
\newtheorem{prop}[thm]{Proposition}
\newtheorem{conj}[thm]{Conjecture}
\theoremstyle{definition}
\theoremstyle{remark}
\newtheorem{rem}{Remark}
\newcommand{\leqor}{\underset{{\scriptscriptstyle (}-{\scriptscriptstyle )}}{<}}
\title{Generic Beauville's Conjecture}
\author{Izzet Coskun}
\address{Department of Mathematics, Statistics, and CS \\
University of Illinois at Chicago, Chicago IL 60607}
\email{icoskun@uic.edu}
\author{Eric Larson}
\address{Department of Mathematics, Brown University}
\email{elarson3@gmail.com}
\author{Isabel Vogt}
\address{Department of Mathematics, Brown University}
\email{ivogt.math@gmail.com}
\thanks{During the preparation of this article, I.C.\ was supported
by NSF FRG grant DMS-1664296 and NSF grant DMS-2200684,  E.L. was supported by
NSF  grants DMS-1802908 and DMS-2200641, and I.V. was supported by NSF grants DMS-1902743 and DMS-2200655.}
\keywords{Pushforwards of stable bundles, Beauville's Conjecture}
\subjclass[2010]{Primary: 14H60. Secondary: 14D20.}
\begin{document}

\begin{abstract}
    Let $\alpha\colon X \to Y$ be a finite cover of smooth curves.  Beauville conjectured that the pushforward of a general vector bundle under \(\alpha\) is semistable if the genus of \(Y\) is at least \(1\) and stable if the genus of \(Y\) is at least \(2\).  We prove this conjecture if the map \(\alpha\) is general in any component of the Hurwitz space of covers of an arbitrary smooth curve \(Y\).
\end{abstract}

\maketitle

\section{Introduction}

  Motivated by the study of the theta linear series on the moduli spaces of vector bundles on curves, Beauville in \cite{b00} (see also \cite[Conjecture 6.5]{b06}) made the following celebrated conjecture: 

\begin{conj}[Beauville]\label{conj-main}
Let $\alpha\colon X \to Y$ be a finite morphism between smooth irreducible projective curves, and let $V$ be a general vector bundle on $X$. Then $\alpha_* V$ is stable if the genus of \(Y\) is at least \(2\) and semistable if the genus of \(Y\) is \(1\).
\end{conj}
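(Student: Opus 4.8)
The plan is to work with the slope criterion: $\alpha_* V$ is stable (resp.\ semistable) precisely when every proper subsheaf $W \subset \alpha_* V$ satisfies $\mu(W) < \mu(\alpha_* V)$ (resp.\ $\leq$), where by Grothendieck--Riemann--Roch
\[
\mu(\alpha_* V) = \frac{\mu(V) + 1 - g_X}{\deg\alpha} - (1 - g_Y)
\]
depends only on the discrete invariants of $V$ and $\alpha$ (the ramification entering through $g_X$ via Riemann--Hurwitz, equivalently through $\deg \Tsch(\alpha)$). Since (semi)stability is open in flat families, it suffices to exhibit, for each irreducible component $H$ of the Hurwitz space, a \emph{single} cover $\alpha_0 \in \bar H$ together with a single bundle $V_0$ of the prescribed rank and degree for which $\alpha_{0*}V_0$ is (semi)stable; openness then propagates (semi)stability to the general cover in $H$ and the general bundle on it, which is exactly the asserted statement.

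First I would reduce both cases to a statement about covers of $\pp^1$. Degenerate the target $Y$ inside a compactification of the Hurwitz space to a nodal curve $Y_0$ and $\alpha$ to an admissible cover $X_0 \to Y_0$, peeling off one node at a time so as to lower $g_Y$: when $g_Y \geq 2$ this splits off handles, and for $g_Y = 1$ one further degenerates the elliptic base to a nodal rational curve, so that in the limit the analysis takes place over a rational base. Correspondingly $V$ degenerates to a bundle on the nodal total space whose pushforward is glued from the pushforwards on the components. Because a destabilizing subsheaf on a nodal curve is cut out component-by-component subject to matching conditions at the nodes, bounding the slope on each component bounds the slope of any global destabilizer; this is the inductive step, carrying the problem down to the rational base case.

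The heart of the argument is therefore the $\pp^1$ base case: for $\alpha\colon X \to \pp^1$ general in its Hurwitz component and $V$ general, show that $\alpha_* V = \bigoplus_i \O_{\pp^1}(a_i)$ is \emph{balanced}, i.e.\ $|a_i - a_j| \leq 1$ for all $i,j$ --- the strongest form of semistability available on $\pp^1$, and exactly the input that forces (semi)stability after gluing along the chain. I would prove balancedness by a second induction internal to the Hurwitz combinatorics: degenerate the cover of $\pp^1$ to one with a reducible base (a pair of $\pp^1$'s joined at a node, or a cover with fewer branch points), where $\alpha_* V$ is assembled from balanced pieces of smaller degree, and combine this with \emph{elementary modifications} of $V$ at points of $X$ to induct on the rank and degree of $V$ (each modification alters $\alpha_* V$ by a controlled twist and interpolates between the relevant invariants). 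Throughout, the adjunction $\mathrm{Hom}(W,\alpha_* V) \cong \mathrm{Hom}(\alpha^* W, V)$ together with the generality of $V$ is used to bound the slopes of potential destabilizers $W$ by bounding maps $\alpha^* W \to V$.

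The main obstacle I anticipate is twofold. First, one must ensure the chosen degenerations genuinely lie in the closure of the \emph{given} component $H$, and that $V_0$ is a limit of general bundles of the correct invariants, so that no stability is lost in passing to the nodal limit; this requires care with the limiting stability condition and with transversality of the admissible-cover degeneration. Second, and more seriously, the module structure of $\alpha_* V$ over $\alpha_* \O_X$ can produce destabilizing subsheaves that do not come from any obvious sub-cover, so the balancedness statement on $\pp^1$ must be robust enough to preclude these; verifying the base of the double induction --- small-degree covers, or covers with minimal branching --- is where the genuine combinatorial work of tracking all splitting types compatible with the admissible-cover gluing will concentrate.
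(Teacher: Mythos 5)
Your proposal degenerates the \emph{target} \(Y\) down to a rational curve and tries to bootstrap from balancedness of pushforwards over \(\pp^1\); the paper instead keeps \(Y\) fixed and smooth and degenerates the \emph{source} \(X\) to a nodal curve \(X_1\) whose normalization \(X_0\) is an \'etale cover of \(Y\), so that the base case is Beauville's already-proven \'etale case and the pushforward under study is honestly a vector bundle on the smooth curve \(Y\) throughout. The difference is not merely cosmetic: it is where your argument has a genuine gap. The inductive step you describe --- ``a destabilizing subsheaf on a nodal curve is cut out component-by-component subject to matching conditions at the nodes, [so] bounding the slope on each component bounds the slope of any global destabilizer'' --- is false as stated. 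A subsheaf of a torsion-free sheaf on a nodal target can have different ranks on different components, its slope is a weighted combination controlled by a choice of polarization, and balancedness of \(\bigoplus \O_{\pp^1}(a_i)\) on each rational component (which is \emph{not} semistable unless all \(a_i\) are equal) does not preclude a global destabilizer; whether one exists depends delicately on the gluing data at the nodes, which here is dictated by the admissible cover and the limit of \(V\) rather than chosen generically. This gluing step is precisely where the difficulty of the conjecture is concentrated, and your proposal defers it to an unspecified ``double induction'' rather than resolving it. Two further problems: your reduction erases the distinction between \(h=1\) (semistability only) and \(h\geq 2\) (stability) --- nothing in a \(\pp^1\)-based argument produces strict stability, whereas the paper gets it from the pairwise non-isomorphy of the conjugates \(\sigma^*L\), which uses \(h\geq 2\) via Riemann--Hurwitz --- and you must land in the closure of a \emph{prescribed} component of \(\sH_{r,g}(Y)\), which is indexed by a subgroup of \(\pi_1(Y)\) and becomes invisible once the target is rational.

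By contrast, the paper's degeneration sidesteps all of this: it glues points in fibers of a cyclic \'etale cover \(X_0 \to Y'\) to form \(X_1\) of the right genus, checks that the resulting cover is primitive (hence lies in the prescribed component), and shows that \(\nu_*V\) is a limit of vector bundles on the smoothing of \(X_1\); then \(\alpha_{1*}\nu_*V = \alpha_{0*}V\) is (semi)stable by the \'etale case, and openness of (semi)stability on the fixed smooth target \(Y\) finishes the proof. You correctly identify openness of (semi)stability as the engine, and your use of it also means you would at best prove the generic statement (general \(\alpha\) in each component), which is indeed all the paper proves; but the core of your plan rests on an unproved and, in this generality, incorrect gluing principle.
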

\noindent We prove Beauville's Conjecture when $Y$ is an {\em arbitrary} smooth irreducible projective curve and $X$ is a general member of {\em any} component of the Hurwitz space of genus $g$ degree $r$ covers of $Y$.

\subsection*{Statement of results} Let $\alpha\colon X \to Y$ be a finite map of degree $r$ from a smooth irreducible projective curve $X$ of genus $g$ to a smooth irreducible projective curve $Y$ of genus $h$. We always work  over an algebraically closed field of characteristic 0 or greater than $r$. 

For a vector bundle $V$ on a curve $X$,  the \defi{slope} $\mu(V)$ is defined by $\mu(V) = \frac{\deg(V)}{\rk(V)}$. The bundle $V$ is called \defi{(semi)stable} if,  for every proper subbundle $W$, we have $\mu(W) \leqor \mu(V)$. Semistable bundles satisfy nice cohomological and metric properties and form projective moduli spaces. Consequently, determining the stability of naturally defined bundles is an important and fundamental problem.

Let $\sH_{r,g}(Y)$ denote the Hurwitz space parameterizing smooth connected degree $r$ genus $g$ covers of $Y$. In general, $\sH_{r,g}(Y)$ is reducible, and when \(g > r(h-1) + 1\), the irreducible components correspond to subgroups of the (\'etale) fundamental group $\pi_1(Y)$ of index diving $r$. With this notation, 
our main theorem is the following.

\begin{thm}\label{thm:gen}
Let \(Y\) be any smooth irreducible projective curve of genus \(h\).  Let \(\alpha \colon X \to Y\) be a general morphism in any component of \(\sH_{r,g}(Y)\).  Let \(V\) be a general vector bundle of any degree and rank on \(X\).
\begin{enumerate}
    \item If \(h=1\), then \(\alpha_*V\) is semistable.
    \item If \(h\geq 2\), then \(\alpha_*V\) is stable.
\end{enumerate}
\end{thm}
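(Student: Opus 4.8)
The plan is to reduce the theorem to the case where $V$ is a \emph{line bundle}, and then to prove that case by degenerating the cover inside its Hurwitz component to a configuration in which (semi)stability can be read off directly. Throughout I will use that (semi)stability is an open condition: on the total space of the universal pushforward over the product of a fixed Hurwitz component with the relative moduli space of rank-$n$ degree-$d$ bundles on $X$, the locus where $\alpha_*V$ is (semi)stable is open, so it suffices to exhibit a single pair $(\alpha,V)$ with $\alpha$ general in the component and $\alpha_*V$ (semi)stable. Equivalently, the maximal destabilizing slope of $\alpha_*V$ is upper semicontinuous, so I am free to specialize \emph{both} $V$ and, within its component, the cover $\alpha$. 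Note that the target slope is pinned down by $\chi(\alpha_*V)=\chi(V)$ and Riemann--Roch on $X$ and $Y$, so destabilizing subsheaves are subsheaves $\sF\subseteq \alpha_*V$ of rank $s<rn$ with $\mu(\sF)\geqor \mu(\alpha_*V)$, and $\mu(\alpha_*V)$ depends only on $(r,g,h,\deg V,\rk V)$.

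\emph{Reduction to line bundles.} A general rank-$n$ bundle on $X$ arises, via the Beauville--Narasimhan--Ramanan spectral construction, as $\pi_*M$ for $\pi\colon \tilde X\to X$ a degree-$n$ cover and $M$ a line bundle on $\tilde X$: taking the defining line bundle on $X$ sufficiently positive makes the spectral curves $\tilde X$ move in a large linear system, so that for general members $\tilde X$ is smooth, $\pi$ is simply branched, and the family $\{\pi_*M\}$ dominates the moduli space $\mathcal M_X(n,d)$; one fixes the target degree $d$ by choosing $\deg M$ in terms of the number $b$ of branch points. Since $\alpha_*\pi_*M=(\alpha\pi)_*M$, this replaces a general rank-$n$ bundle on $X$ by a line bundle on the composite cover $\alpha\pi\colon \tilde X\to Y$, which has degree $rn$. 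The key point is that the composite is general in a component of $\sH_{rn,\tilde g}(Y)$: a degree-$rn$ cover of $Y$ admitting a block system of size $n$ is \emph{exactly} a factorization through a degree-$r$ cover, so the imprimitive covers (monodromy landing in $S_n\wr S_r$) with the prescribed block data form a component that is dominated by the composite construction as $\alpha$ and $\pi$ vary. Thus the rank-$n$ statement over $Y$ follows from the $n=1$ statement of the \emph{same} theorem applied to \emph{all} components of Hurwitz spaces of covers of $Y$ --- in particular these imprimitive ones --- and so it remains to prove the line bundle case directly, for every component, without any further rank reduction (avoiding circularity).

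\emph{Line bundle case.} Here $\alpha_*L$ has rank $r$, and a destabilizing subsheaf $\sF\subseteq\alpha_*L$ yields, by adjunction and the counit $\alpha^*\alpha_*L\to L$, a nonzero map $\alpha^*\sF\to L$ whose image is $L(-D)$ for some effective $D$; tracking the kernel of rank $s-1$ relates $\deg\sF$ to $\deg L$, $\deg D$, and the splitting type of $\alpha_*\O_X=\O_Y\oplus \Tsch^\vee$ (the trace splitting, valid in characteristic $0$ or $>r$). I would then degenerate $\alpha$ to the boundary of its Hurwitz component --- to an admissible cover of a nodal (e.g.\ chained) target --- where $\alpha_*L$ becomes a modification of the pushforward on each component glued along the nodes, and where the Tschirnhausen bundle and the successive positive/negative elementary modifications $\posmod,\negmod$ introduced by the branch points can be computed. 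On such a reducible base, (semi)stability becomes the combinatorial statement that the restrictions to the components are balanced and the gluing is generic, which one verifies at an explicit base case (small genus, or $\mathbb P^1$-components where splitting types are computable), and then propagates back as branch points are added: each added branch point changes $\alpha_*L$ by a controlled modification that, for general $L$, preserves the balancing and hence (semi)stability.

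The hard part is the line bundle case, and specifically ruling out destabilizing $\sF$ \emph{uniformly across all components} --- i.e.\ for arbitrary monodromy, including the imprimitive components forced on us by the reduction above. The difficulty is twofold: producing a degeneration that stays inside a prescribed component yet reaches a genuinely computable base case, and showing that the elementary modifications attached to the branch locus generically maximize the balancing of $\alpha_*L$ rather than creating a subsheaf of large slope. The adjunction bound controls $\sF$ only in terms of subsheaves of $\alpha^*\sF$ and the effective divisor $D$, so the real work is a dimension count on the incidence variety of (cover, line bundle, destabilizing $\sF$), showing that as $\alpha$ and $L$ vary generally this locus has negative expected dimension; I expect the $\posmod/\negmod$ and blow-up machinery in the paper is precisely what makes this count and the base case tractable.
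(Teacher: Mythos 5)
There is a genuine gap, and it occurs already in your reduction to line bundles. You apply the theorem being proved to the composite cover \(\alpha\pi \colon \tilde X \to Y\), but that composite is \emph{not} a general member of its component of \(\sH_{rn,\tilde g}(Y)\). The components of the Hurwitz space are indexed by the subgroup \((\alpha\pi)_*\pi_1(\tilde X) \subseteq \pi_1(Y)\) (equivalently, by the \'etale part of the primitive--\'etale factorization), not by block systems of the monodromy: the locus of degree-\(rn\) covers admitting a block system of size \(n\) is a proper closed subvariety of its component whenever the general member of that component is primitive over the associated \'etale cover \(Y'\). So your claim that ``the imprimitive covers with the prescribed block data form a component'' is false, and the statement ``\(\alpha_*V\) is (semi)stable for general \(\alpha\) in each component'' gives no information about the special cover \(\alpha\pi\). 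The reduction would require the line bundle case for \emph{all} covers (essentially the full Beauville conjecture in rank one), which is not what you go on to prove. The paper does perform a rank reduction of this flavor, but only in the \'etale setting, where it is legitimate because Beauville's \'etale result holds for \emph{every} \'etale cover and every line bundle (semistability) or general line bundle (stability), and a composite of \'etale covers is again \'etale.

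Separately, your line bundle case is a strategy sketch rather than a proof: the degeneration to admissible covers over a nodal target, the analysis of elementary modifications at branch points, and the dimension count on the incidence variety of destabilizing subsheaves are all deferred, and you acknowledge that the hard part remains. The paper's actual mechanism is different and avoids these difficulties entirely: it keeps \(Y\) smooth and degenerates the \emph{source}, specializing \(X\) to a nodal curve \(X_1\) obtained from a cyclic \'etale cover \(\beta_0 \colon X_0 \to Y'\) by gluing \(n = g - r(h-1) - 1\) pairs of points \(p_i\), \(\tau(p_i)\) lying in common fibers of \(\beta_0\). One checks that \(X_1 \to Y'\) is primitive (so \(X_1 \to Y\) lies in the closure of the prescribed component), observes that \((\alpha_1)_*\nu_*V = (\alpha_0)_*V\) is (semi)stable by Beauville's \'etale theorem since \(\alpha_0 \colon X_0 \to Y\) is \'etale, and then shows that \(\nu_*V\) is a flat limit of vector bundles on a smoothing of \(X_1\) (via blowing up the nodes, a degree-two base change, and the theorem on formal functions). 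Openness of (semi)stability then concludes. In particular, no analysis of destabilizing subsheaves, Tschirnhausen bundles, or branch-point modifications is needed; if you want to complete your argument you would need to either prove the line bundle case for arbitrary covers or replace the spectral-cover reduction with one that stays inside the given component.
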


\begin{rem}
It may happen that for special $V$, the bundle $\alpha_* V$ is not semistable. For example, $\alpha_* \O_X$ has $\O_Y$ as a direct summand. When the map $\alpha$ is ramified, $\O_Y$ destabilizes $\alpha_* \O_X$ (see \cite{clv-tschirnhausen}).
\end{rem}

\subsection*{History of the problem} Beauville made Conjecture \ref{conj-main} in an unpublished note dating to 2000 \cite{b00}. In the same note, Beauville proved the conjecture if 
\begin{enumerate}
    \item $\alpha$ is \'etale \cite[Propostion 4.1]{b00}; or
    \item $r < g (\sqrt{3} + 1) -1$ \cite[Corollary 3.4]{b00}; or
    \item $V$ is a line bundle with $|\chi(V)| \leq g + \frac{g^2}{r}$ \cite[Proposition 3.2]{b00}.
\end{enumerate}
It is an elementary observation that when \(h=0\), the pushforward of a general vector bundle splits as \(\bigoplus_{i=1}^r \O_{\pp^1}(a_i)\), where \(|a_i-a_j| \leq 1\) for every \(i,j\) (see \cite[\S 1]{b00}).

Beauville, Narasimhan, and Ramanan \cite{bnr} earlier proved that a general vector bundle $V$ of degree $d$ and rank $r$ on $Y$ arises as the pushforward of a line bundle from some cover of degree $r$. Hence, there exists covers of $Y$ for which Beauville's Conjecture is true.

Mehta and Pauly \cite{mp07} proved that if $\alpha$ is the Frobenius morphism, $h\geq 2$, and $V$ is semistable, then $\alpha_* V$ is semistable.

\subsection*{Strategy}
Recall that $\alpha\colon X \to Y$  is called  \defi{primitive} if the map   $\alpha_*\colon \pi_1(X) \to \pi_1(Y)$ induced on (\'etale) fundamental groups  is surjective. Every degree $r$ cover $\alpha \colon X \to Y$ factors into  a primitive map $\alpha^{\pr} \colon X \to Y'$  followed by an \'etale map $\alpha^{\et}\colon Y' \to Y$, where $Y'$ is the \'etale cover associated with the subgroup $\alpha_*\pi_1(X) \subset \pi_1(Y)$.

We  prove Theorem \ref{thm:gen} by specializing \(X\) to a nodal curve. Let $\alpha\colon X \to Y$ be a general element in any component of $\sH_{r,g}(Y)$. Let $\alpha= \alpha^{\pr} \circ \alpha^{\et}$ be the primitive-\'etale factorization of $\alpha$. Let $\alpha^{\et}$ and $\alpha^{\pr}$ have degrees $r'$ and  $s= \frac{r}{r'}$, respectively. Let $\beta_0\colon X_0 \to Y'$  be a degree $s$ cyclic \'etale cover of $Y'$. The resulting map $\alpha_0 \colon X_0 \to Y$ is \'etale and Conjecture \ref{conj-main} holds for the map $X_0 \to Y$ by \cite[Proposition 4.1]{b00}. 

Let $p_{j}$ and $p_{j}'$ be points on $X_0$ contained in a fiber of $\beta$  such that the cyclic action takes $p_{j}$ to $p_{j}'$.  We identify the appropriate number of pairs  $p_{j}$ and $p_{j}'$ on $X_0$  to form a nodal curve $X_1$ of genus $g$. Let $\nu\colon X_0 \to X_1$ be the normalization map. The induced map $\beta_1\colon X_1 \to Y'$ is primitive (see Proposition \ref{prop-primitive}), and so $\alpha_1= \alpha^{\et} \circ \beta_1\colon X_1 \to Y$ is in the same irreducible component of $\sH_{r,g}(Y)$ as $X$ (see Lemma \ref{lem-hurwitz}). For a general bundle $V$ on $X_0$, the pushforward ${\alpha_0}_*V={\beta_1}_* (\nu_* V)$ is stable if $h \geq 2$ and semistable if $h=1$. Finally,  we observe that  $\nu_* V$ is a limit of vector bundles on nearby deformations of $X_1$ (see Proposition \ref{pushforward-is-limit}). Together with the openness of (semi)stability, this proves Theorem \ref{thm:gen}.

\subsection*{Acknowledgements} We would like to thank Arnaud Beauville, Anand Deopurkar, Aaron Landesman, Daniel Litt, Chien-Hao Liu,  Anand Patel and Prof. Shing-Tung Yau  for invaluable discussions. 
We are grateful to Mathematisches Forschungsinstitut Oberwolfach for providing an excellent work environment during the ``Recent trends in algebraic geometry'' workshop in June 2023.
%Finally, we thank the anonymous referees for helpful feedback that have improved the paper. 

\section{Preliminaries}

\subsection{Basic facts} Let $\alpha\colon X \to Y$ be a finite map of degree $r$ from a smooth irreducible projective curve $X$ of genus $g$ to a smooth irreducible projective curve $Y$ of genus $h$. Since the characteristic is 0 or greater than $r$, the map $\alpha$ is separable. By the Riemann--Hurwitz formula
$$2g-2 = r(2h-2) + b,$$ where $b$ is the degree of the ramification divisor. In particular,
$g \geq r(h-1) + 1$ with equality if and only if $\alpha$ is \'etale. 

If $V$ is a vector bundle of rank $s$ and degree $d$ on $X$, then $\alpha_*(V)$ is a vector bundle of rank $rs$ on $Y$. Using the fact that $\chi(V)= \chi(\alpha_*V)$ and the Riemann--Roch formula, we compute the degree $d'$ of $\alpha_*(V)$ via
$$d+s(1-g) = \chi(V) = \chi(\alpha_*V)= d' + rs(1-h).$$
We conclude that $d' = d+ s(1-g) - rs(1-h).$

\subsection{The primitive-\'etale factorization}

Let $\sH_{r,g}(Y)$ denote the Hurwitz space parameterizing smooth connected degree $r$ genus $g$ covers of $Y$. If $g < r(h-1)+1$, then $\sH_{r,g}(Y)$ is empty. If $g=r(h-1)+1$, then the degree $r$ covers of genus $g$ are \'etale and there are finitely many. In general, the Hurwitz space $\sH_{r,g}(Y)$ is not irreducible. The following lemma characterizes the irreducible components.

\begin{lem}\label{lem-hurwitz}
    Let \(Y\) be a smooth and irreducible curve of genus $h$ defined over an algebraically closed field of characteristic \(0\) or larger than \(r\).  Let $g > r(h-1) + 1$. Then the components of \(\sH_{r,g}(Y)\) are in bijection with subgroups of \(\pi_1(Y)\) of index dividing \(r\). 
\end{lem}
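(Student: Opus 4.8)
The plan is to build an explicit bijection out of the primitive--\'etale factorization, reducing the statement to the irreducibility of a single Hurwitz space of primitive covers. To a cover $[\alpha\colon X\to Y]\in\sH_{r,g}(Y)$ I would associate the subgroup $H:=\alpha_*\pi_1(X)\subseteq\pi_1(Y)$ (up to conjugacy), equivalently the connected \'etale cover $\alpha^{\et}\colon Y'\to Y$ appearing in the factorization $\alpha=\alpha^{\et}\circ\alpha^{\pr}$; its index $[\pi_1(Y):H]=\deg\alpha^{\et}=:r'$ divides $r$. The first step is to check this invariant is locally constant on $\sH_{r,g}(Y)$: since $Y$ (hence $\pi_1(Y)$) is fixed, the finite-index subgroup $H$ is read off from the monodromy representation of $\alpha$, which cannot jump in a connected family, so the \'etale part $Y'\to Y$ is rigid. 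This yields a well-defined map
\[
  \Phi\colon \pi_0\big(\sH_{r,g}(Y)\big)\longrightarrow \{\, H\le\pi_1(Y) : [\pi_1(Y):H]\mid r \,\}.
\]

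For surjectivity, given $H$ of index $r'\mid r$, let $Y'\to Y$ be the corresponding connected \'etale cover, so by Riemann--Hurwitz $g(Y')=r'(h-1)+1$, and set $s=r/r'$. I would exhibit a primitive degree-$s$ cover $X\to Y'$ with $X$ smooth, connected, of genus $g$: such a cover has ramification divisor of degree $b=2g-2-s\,(2g(Y')-2)=2\big(g-1-r(h-1)\big)$, a positive even integer precisely because $g>r(h-1)+1$. One builds such a cover by choosing a transitive, $\pi_1$-surjective tuple of monodromy generators (for instance a connected simply branched cover with $b$ transpositions and handle generators chosen to surject onto $\pi_1(Y')$); composing with $Y'\to Y$ gives a point of $\sH_{r,g}(Y)$ with invariant $H$.

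Injectivity is the crux. Because the primitive--\'etale factorization is canonical, every cover with invariant $H$ factors uniquely through the \emph{fixed} cover $Y'\to Y$ with primitive part of degree $s$ and genus $g$; composition with $Y'\to Y$ identifies $\Phi^{-1}(H)$ with the set of components of the Hurwitz space $\sH^{\pr}_{s,g}(Y')$ of primitive degree-$s$ covers of $Y'$. Thus injectivity of $\Phi$ is \emph{equivalent} to the irreducibility of $\sH^{\pr}_{s,g}(Y')$. I would prove this by a monodromy/transitivity argument: a point of $\sH^{\pr}_{s,g}(Y')$ is a tuple $(a_1,b_1,\dots,a_{h'},b_{h'},\gamma_1,\dots,\gamma_b)$ in $S_s$ with $\prod[a_i,b_i]\prod\gamma_j=1$, acting transitively and with $\pi_1$-surjective image, taken up to the action of the mapping class group of $(Y',\{b\text{ points}\})$, i.e.\ the braid (Hurwitz) moves on the $\gamma_j$ together with the surface braiding on the $a_i,b_i$. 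The goal is to show this action has a single orbit, by using the Hurwitz moves to standardize the branch monodromy and the positive-genus handles $a_i,b_i$ (with $b\ge2$) to conjugate any such tuple into a fixed normal form.

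The main obstacle is exactly this transitivity over a base of higher genus with possibly very few branch points: unlike the case $Y'=\pp^1$ (where $b$ is automatically large and simple branching forces the full symmetric monodromy, so classical Clebsch--Hurwitz applies), here the monodromy group of the generic primitive cover need not be all of $S_s$, so Clebsch--Hurwitz cannot be invoked directly. I would handle this by appealing to (or adapting) the known connectedness results for the mapping-class-group action on primitive covers of a positive-genus surface, where the handle generators supply the freedom to reorganize the branch data, and would treat $g(Y')=0$ separately via Clebsch--Hurwitz. Finally, to pass from characteristic $0$ to characteristic $>r$, I would note that all covers in question are tame, so that the tame fundamental group and specialization of tame Hurwitz spaces reduce the enumeration of components to the characteristic-$0$ computation.
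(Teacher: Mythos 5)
Your proposal is correct and follows essentially the same route as the paper: the invariant is the subgroup $\alpha_*\pi_1(X)$ (constant in connected families), surjectivity comes from constructing a primitive cover of the fixed \'etale cover $Y'$ using the inequality $g > r(h-1)+1$, and injectivity reduces to the irreducibility of the Hurwitz space of primitive degree-$s$ genus-$g$ covers of $Y'$, which the paper likewise obtains by citing the Clebsch--Fulton--Gabai--Kazez connectedness results for primitive covers of a positive-genus surface rather than reproving the transitivity of the mapping-class-group action. The only difference is one of emphasis: you sketch how one might re-derive that transitivity, whereas the paper simply invokes the reference.
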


\begin{proof}
Let $\sH$ be an irreducible component of the Hurwitz scheme $\sH_{r,g}(Y)$. Given $\alpha \colon X \to Y$ in $\sH$, the subgroup $\alpha_* \pi_1(X)$ of $\pi_1(Y)$ has index dividing $r$. Since this is a discrete invariant and is constant in irreducible families, $\alpha_* \pi_1(X)$ is an invariant of $\sH$. 

Conversely, given a subgroup $G \subset \pi_1(Y)$ of index $r^\et$ dividing $r$, up to isomorphism there is a unique \'etale cover $\delta\colon Y' \to Y$ corresponding to $G$ of degree $r^\et$ and genus \(h' = r^\et(h-1) + 1\). Let \(r^\pr = r/r^\et\).  Given the inequality 
\[g > r(h-1) + 1 = r^\pr(h'-1) + 1,\] 
there exists a genus $g$ primitive cover $\gamma \colon X \to Y'$ of degree $r^\pr$.  For any such cover, we obtain an element of $\sH_{r,g}(Y)$ by taking $\alpha= \delta \circ \gamma$. Furthermore, $\alpha_* \pi_1(X) = G$. On the other hand, if $\gamma \colon X \to Y'$ is not primitive but $\gamma_* \pi_1(X)$ has index $s$ in $\pi_1(Y')$, then $\alpha_* \pi_1(X)$ has index $s r^\et$ in $\pi_1(Y)$ and cannot be $G$. We conclude that if $\alpha_*\pi_1(X)=G$, then $\alpha$ must factor as the composition of $\delta$ and a primitive cover of $Y'$.  By results of Clebsch \cite{clebsch}, Fulton \cite{fulton}, and Gabai--Kazez \cite{gk90}
(see \cite[Proposition 2.2]{clv-tschirnhausen}), the Hurwitz scheme parameterizing genus $g$ degree $r^\pr$ primitive covers of $Y'$ is irreducible. We conclude that there is a bijection between irreducible components of $\sH_{r,g}$ and subgroups of $\pi_1(Y)$ of index dividing $r$.
\end{proof}

\subsection{The \'etale case}\label{sec-etale} We briefly recall Beauville's proof of Conjecture \ref{conj-main}  \cite[Proposition 4.1]{b00} in the \'etale case (see also the proof of \cite[Proposition 1.3]{clv-tschirnhausen}).

First, we show that it suffices to consider the case of line bundles. Given a vector bundle $V$ on $X$ of degree $d$ and rank $s$, let $\delta \colon Z \to X$ be an \'etale cover of degree $s$. If $L$ is a line bundle of degree $d$ on $Z$, then $\delta_* L$ is a vector bundle of rank $s$ and degree $d$ on $X$. Hence, if we prove Conjecture \ref{conj-main} for (\'etale) maps in the case of line bundles, it follows for (\'etale) maps in higher rank as well.

Let $\alpha \colon X \to Y$ be \'etale and let $\rho \colon Z \to Y$ be the Galois cover associated to $\alpha$ with Galois group $G$. Let $\Sigma$ be the set of $Y$-morphisms $\sigma \colon Z \to X$. Then  
\[W \colonequals \rho^* \alpha_* L \cong  \bigoplus_{\sigma \in \Sigma} \sigma^* L.\]  The pullback by $\rho$ of any destabilizing subbundle of $\alpha_* L$ would destabilize $W$. Hence, $\alpha_* L$ is semistable for {\em every} line bundle $L$ on $X$.

If $\alpha_* L$ has a proper subbundle $F$ of the same slope as $\alpha_* L$, then $\rho^* F$ is a $G$-invariant subbundle of $W$.  Since the category of semistable bundles of a fixed slope is abelian with simple objects stable bundles, $\rho^* F \cong \oplus_{\sigma \in \Sigma'} \sigma^* L$ for some $\Sigma' \subset \Sigma$. 
Since $G$ acts transitively on $\Sigma$, it suffices to show that, if \(h>1\) and \(L\) is general, the line bundles \(\sigma^*L\) are pairwise nonisomorphic as $\sigma$ varies in $\Sigma$. 

For any fixed \(\sigma \in \Sigma'\), let \(H \subset G\) be the subgroup fixing \(\sigma\).  The subvariety \(\sigma^*\Pic^dX \subset \Pic Z\) has dimension \(g\), contains \(\sigma^*L\), and is invariant under \(H\).  On the other hand, if \(\sigma^*\Pic^d X\) is invariant under a subgroup \(H'\) with \(H \subsetneq H' \subset G\), then it would be pulled back from \(\Pic (Z/H')\).  By the Riemann--Hurwitz formula, the genus of \(Z/H'\) is strictly smaller than \(g\).  Hence by dimension reasons, this containment is impossible and the \(\sigma^*L\) are pairwaise distinct.
 This shows the stability of $\alpha_* L$.

\section{Proof of Theorem~\ref{thm:gen}}\label{sec:our_degen}

Fix an \'etale cover \(\alpha^\et \colon Y' \to Y\) of degree \(r^\et \mid r\),
and a genus \(g > r(h - 1) + 1\).
We first explain how to construct a nodal cover \(\alpha_1 \colon X_1 \to Y\), whose primitive-\'etale factorization is 
\[X_1 \to Y' \xrightarrow{\alpha^\et} Y.\]
%In the subsequent subsections we will prove that \(X \to Y'\) is primitive
%(so that \(\alpha\) has the desired primitive-\'etale factorization),
%respectively the proof that \(\alpha_* V\) is stable for a general vector bundle \(V\) of any degree on \(X\).

The first step of our construction is to fix a cyclic \'etale cover \(\beta_0 \colon X_0 \to Y'\) of degree \(r^\pr = r / r^\et\).
Such covers correspond to points of order \(r^\pr\) in \(\text{Jac}(Y')\), which always exist.
Write \(\tau\colon X_0 \to X_0\) for the automorphism corresponding to the generator of \(\zz / r^\pr \zz\).
Let \(n \colonequals g - r(h-1) - 1\).  We then pick general points \(p_1, p_2, \ldots, p_{n} \in X_0\),
and let \(p_i' = \tau(p_i)\). 
Let \(X_1\) be the curve obtained from \(X_0\) by gluing every \(p_i\) to \(p_i'\) for \(1 \leq i \leq n\) and denote the normalization \( \nu \colon X_0 \to X_1\).  Let \(\beta_1 \colon X_1 \to Y'\) be the induced morphism, and let \(\alpha_1 \colonequals \alpha^\et \circ \beta_1\).
%We take our cover \(X \to Y'\) to be a smoothing of \(X_1 \to Y'\);
%composing with \(Y' \to Y\) defines the map \(\alpha \colon X \to Y\).

\begin{prop}\label{prop-primitive}
The cover \(\beta_1 \colon X_1 \to Y'\) is primitive.
\end{prop}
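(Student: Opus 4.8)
The plan is to prove primitivity by showing that $\beta_1$ admits no nontrivial intermediate factorization through a connected \'etale cover of $Y'$. Since $X_0$ is connected, so is $X_1$, and by the Galois correspondence between connected covers and finite-index subgroups of $\pi_1(Y')$, the surjectivity of $(\beta_1)_* \colon \pi_1(X_1) \to \pi_1(Y')$ is equivalent to the statement that $\beta_1$ does not factor as $X_1 \xrightarrow{\psi} Y'' \xrightarrow{\phi} Y'$ with $\phi$ connected \'etale of degree $d > 1$. So I would assume such a factorization exists and derive a contradiction from the way $X_1$ is glued.

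Two inputs drive the argument. First, as the deck group $\langle \tau \rangle \cong \zz/r^\pr\zz$ of the Galois \'etale cover $\beta_0$ acts freely on $X_0$, every nonidentity power acts without fixed points: $\tau^k(p_i) \neq p_i$ for all $0 < k < r^\pr$ and all $i$. Second, because $\beta_0$ is \emph{cyclic}, its intermediate connected \'etale covers $X_0 \to Y'' \to Y'$ are exactly the quotients $X_0 \to X_0/\langle \tau^d\rangle$ for divisors $d \mid r^\pr$, and each such $\phi\colon X_0/\langle\tau^d\rangle \to Y'$ is itself Galois (every subgroup of a cyclic group is normal), of degree $d$. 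I will use cyclicity precisely here, since it is what forces the intermediate cover to be Galois and hence pins down the fibers.

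Now suppose $\beta_1 = \phi \circ \psi$ with $\phi$ of degree $d > 1$. Composing with the normalization $\nu$ shows that $\beta_0 = \beta_1 \circ \nu$ also factors through $\phi$; by the previous paragraph I may identify $Y''$ with $X_0/\langle\tau^d\rangle$, and since $\phi$ is Galois, the lift $\psi \circ \nu \colon X_0 \to Y''$ equals $\sigma \circ q$ for the canonical quotient $q\colon X_0 \to X_0/\langle\tau^d\rangle$ and some deck transformation $\sigma$ of $\phi$. In particular $\psi \circ \nu$ has the same fibers as $q$, namely the $\langle\tau^d\rangle$-orbits. Because $\psi$ is a morphism out of the nodal curve $X_1$, it must respect every gluing, so for each $i$ we need $\psi\circ\nu(p_i) = \psi\circ\nu(p_i')$, i.e.\ $p_i$ and $p_i' = \tau(p_i)$ must lie in the same $\langle\tau^d\rangle$-orbit. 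This forces $\tau^{jd-1}(p_i) = p_i$ for some integer $j$, whence $jd \equiv 1 \pmod{r^\pr}$ by the free action; but $d \mid r^\pr$ and $d > 1$ make this impossible. As $n = g - r(h-1) - 1 \geq 1$, there is at least one glued pair to produce the contradiction, so no factorization with $d > 1$ exists and $\beta_1$ is primitive.

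The main obstacle is the reduction in the third paragraph: one must argue cleanly that a factorization of the \emph{nodal} cover $\beta_1$ restricts along $\nu$ to an intermediate factorization of the cyclic \'etale cover $\beta_0$, and then extract from the morphism property of $\psi$ on $X_1$ the exact orbit condition $q(p_i) = q(p_i')$. Once this ``the gluing must be respected'' condition is correctly formulated, the conclusion is the elementary arithmetic fact that a proper divisor $d$ of $r^\pr$ cannot satisfy $jd \equiv 1 \pmod{r^\pr}$. If one instead prefers to argue directly on fundamental groups, one can compute $\pi_1(X_1) \cong \pi_1(X_0) * F_n$ with $F_n$ free of rank $n$, and check that each of the $n$ generators coming from the nodes maps under $(\beta_1)_*$ to a loop whose monodromy is $\tau^{\pm 1}$, a generator of the cyclic quotient $\pi_1(Y')/(\beta_0)_*\pi_1(X_0) \cong \zz/r^\pr\zz$; this yields surjectivity immediately but requires more care with the \'etale fundamental group of a nodal curve.
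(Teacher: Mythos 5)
Your proof is correct, but it runs along a genuinely different track from the paper's. The paper invokes the criterion (Szamuely, Proposition 5.5.4(2)) that surjectivity of \(\pi_1(X_1) \to \pi_1(Y')\) is equivalent to connectedness of \(X_1 \times_{Y'} Y''\) for \emph{every} connected finite \'etale cover \(Y'' \to Y'\); it then observes that \(\zz/r^\pr\zz\) permutes the components of \(X_0 \times_{Y'} Y''\) transitively and that the node \(p_1 = p_1'\) glues each component \(\epsilon(Z)\) to \(\epsilon(\tau(Z))\), so the whole fiber product is chained together. You instead use the dual formulation --- non-surjectivity is equivalent to a factorization of \(\beta_1\) through a nontrivial connected \'etale cover --- and then exploit the Galois correspondence for the cyclic cover \(\beta_0\) to pin down the only candidates \(Y'' \cong X_0/\langle \tau^d\rangle\), reducing the problem to the impossibility of \(jd \equiv 1 \pmod{r^\pr}\) for a proper divisor \(d > 1\). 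Both arguments ultimately rest on the same geometric fact (the gluing identifies \(p_i\) with \(\tau(p_i)\) for a \emph{generator} \(\tau\) of the deck group), but yours trades the paper's direct connectivity check for the lifting criterion on the nodal curve \(X_1\) plus the classification of intermediate covers of a cyclic cover; the paper's version avoids classifying anything and needs only that \(\epsilon((p_1,y'')) = \epsilon((p_1',y''))\), which is why it reads shorter. The one step you should spell out if you write this up is the reduction you yourself flag: that a lift \(\psi\) of \(\beta_1\) composed with \(\nu\) is a finite \'etale surjection \(X_0 \to Y''\) (so that \(Y''\) really is an intermediate cover of \(\beta_0\)), which follows since \(\beta_0 = \phi \circ (\psi \circ \nu)\) and \(\phi\) are both finite \'etale and \(Y''\) is connected. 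Your closing alternative via \(\pi_1(X_1) \cong \pi_1(X_0) * F_n\) should be treated with caution in the \'etale setting (one needs the profinite version of van Kampen for the nodal curve), but since it is not part of your main argument this does not affect correctness.
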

\begin{proof}
We must show that the pushforward \(\pi_1(X_1) \to \pi_1(Y')\) is surjective.
By \cite[Proposition 5.5.4(2)]{s09},
this is equivalent to the assertion that for all finite \'etale connected covers \(Y'' \to Y'\),
the fiber product \(X_1 \times_{Y'} Y''\) is connected.

Consider the dominant map \(\epsilon \colon X_0 \times_{Y'} Y'' \to X_1 \times_{Y'} Y''\).
By construction, \(\zz/r^\pr\zz\) acts transitively on the components of \(X_0 \times_{Y'} Y''\).
Therefore it suffices to show that for any component \(Z \subset X_0 \times_{Y'} Y''\),
the components \(\epsilon(Z)\) and \(\epsilon(\tau(Z))\) intersect.
Since \(Z \to X_0\) is surjective, \(Z\) contains a point of the form \((p_1, y'')\) for some \(y'' \in Y''\),
and so \(\tau(Z)\) contains \((p_1', y'')\).
Since \(\epsilon((p_1, y'')) = \epsilon((p_1', y''))\), the components \(\epsilon(Z)\) and \(\epsilon(\tau(Z))\) intersect as desired.
\end{proof}

By the theory of formal patching (see \cite[Lemma 5.6]{li}), the map \(\beta_1\) can be smoothed to a map \(\beta\colon X \to Y'\)  with a smooth domain \(X\).  Since being primitive is a deformation invariant, the resulting smoothing \(\beta\) is also primitive by Proposition~\ref{prop-primitive}.

\begin{prop} \label{pushforward-is-limit}
Given a vector bundle \(V\) on \(X_0\), the pushforward \(\nu_*V\) to \(X_1\) is a limit of vector bundles
of the same rank and slope \(\mu(V) + n\) on the smoothing \(X\).
\end{prop}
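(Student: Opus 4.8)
The plan is to separate the statement into a numerical computation and a smoothing construction. First I would verify that $\nu_*V$ is a torsion-free sheaf on $X_1$ with the stated invariants. Since $\nu$ is finite, $\nu_*$ is exact and $\nu_*V$ is coherent; it is torsion-free because it embeds into the constant sheaf $K(X_1)^{\oplus s}$ (using that $\nu$ is birational and $X_1$ is reduced and irreducible), and its generic rank is $\rk V = s$. For the slope, $\chi(X_1,\nu_*V)=\chi(X_0,V)$ because $\nu$ is finite; gluing $n$ pairs of points raises the arithmetic genus by $n$, so $p_a(X_1)=p_a(X_0)+n=g$, and comparing Riemann--Roch on $X_0$ and on $X_1$ yields $\deg\nu_*V=\deg V+sn$, hence $\mu(\nu_*V)=\mu(V)+n$. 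By flatness the general fibers of any smoothing family then automatically have rank $s$ and slope $\mu(V)+n$.

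Next I would exhibit $\nu_*V$ as a flat limit of vector bundles by a local-to-global construction. \'Etale-locally at a node the total space of the smoothing is $\Spec k[[x,y]]$ with smoothing parameter $t=xy$, and there $\nu_*V\cong (k[[x]]\oplus k[[y]])^{\oplus s}$, the maximal non-locally-free torsion-free module at the node. The key point is that this module is the reduction of an explicit flat family on the $A_1$-model $\Spec k[[x,y,s]]/(xy-s^2)$ obtained after the base change $t=s^2$: the ideal $\mathcal I=(x,s)$, presented by the matrix factorization $\left(\begin{smallmatrix} y & s\\ -s & -x\end{smallmatrix}\right)$ of $xy-s^2$, is flat over $k[[s]]$, satisfies $\mathcal I/s\mathcal I\cong k[[x]]\oplus k[[y]]$, and is free of rank one for $s\neq 0$. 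Taking $\mathcal I^{\oplus s}$ near each node, the pullback of $V$ away from the nodes, and gluing them on the punctured neighborhoods where both are free of rank $s$, produces a coherent sheaf $\mathcal V$ flat over the base, with special fiber $\nu_*V$ and locally free general fiber on the smoothing $X$.

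The main obstacle, and the place that requires genuine care, is precisely the base change above: on the Hurwitz-generic smoothing the total space is \emph{regular} with $t=xy$, and there $\nu_*V$ is genuinely \emph{not} a flat limit of vector bundles. Indeed, since $X_1$ is irreducible, limits of line bundles along a regular total space remain locally free, and a direct rank count shows that any finitely generated flat extension of $\nu_*V$ with locally free general fiber would force the module of relations to increase in rank. This is the same semistable-reduction phenomenon by which a non-locally-free rank-one sheaf enters a compactified Jacobian only after a ramified base change. The clean way to package the argument is therefore to invoke properness of the relative moduli space $\overline{\mathcal M}\to B$ of semistable torsion-free sheaves: its central fiber contains $[\nu_*V]$ and its generic fiber parametrizes vector bundles on $X$, so the valuative criterion produces the smoothing family after a finite base change, while the local model $\mathcal I$ above certifies that the resulting limit is $\nu_*V$ itself. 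Since base change leaves the general fiber $X$ and its component of $\sH_{r,g}(Y)$ unchanged, this is harmless for the application to openness of (semi)stability.
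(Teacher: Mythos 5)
Your first paragraph is fine, and your diagnosis of the key phenomenon is exactly right and matches the paper in spirit: $\nu_*V$ is locally $(k[[x]]\oplus k[[y]])^{\oplus \rk V}$ at each node, it cannot be a flat limit of vector bundles over a smoothing with regular total space (your rank count on the presentation module is the correct obstruction), and after the base change $t=s^2$ the ideal $\mathcal{I}=(x,s)$ on $k[[x,y,s]]/(xy-s^2)$ is the correct local model. This is precisely the geometry underlying the paper's proof: there one blows up the nodes, base changes by degree $2$, and normalizes to obtain a semistable family $\mathcal{X}^+\to\Delta'$ with reduced central fiber $X^+ = X_0\cup(n\text{ copies of }\pp^1)$ and a contraction $\kappa\colon \mathcal{X}^+\to\mathcal{X}'=\mathcal{X}\times_\Delta\Delta'$ onto your $A_1$-model; pushing forward $\O_{\pp^1}(1)^{\oplus\rk V}$ on the exceptional curves under $\kappa$ produces exactly your $\mathcal{I}^{\oplus\rk V}$.

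However, both of your proposed globalizations have genuine gaps. The gluing argument fails as stated: $\mathcal{I}$ restricted to the punctured neighborhood of the $A_1$ point is the nontrivial $2$-torsion element of the local class group (it is the ideal of a ruling of the quadric cone), so $\mathcal{I}^{\oplus\rk V}$ is \emph{not} free there and admits no isomorphism with a trivialized bundle on the overlap; moreover ``the pullback of $V$ away from the nodes'' is undefined (there is no retraction of the family onto its central fiber, so extending $V$ off $X_1\setminus\{\text{nodes}\}$ already requires an argument), and even granting a formal-patching construction one must check that the gluing restricts over $X_1$ to the canonical identifications --- otherwise the central fiber is only locally, not globally, isomorphic to $\nu_*V$. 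The properness fallback is circular: properness of $\overline{\mathcal{M}}\to\Delta'$ lets you extend a family given over the punctured disk, but no such family has been produced; what is needed is that $[\nu_*V]$ lies in the closure of the locus of bundles on nearby fibers, i.e.\ that the component of $\overline{\mathcal{M}}$ through $[\nu_*V]$ dominates the base, which is the statement being proved (and semistability of $\nu_*V$ as a torsion-free sheaf on $X_1$ is also asserted without proof). The paper closes exactly this gap by working on the resolution $\mathcal{X}^+$, where the relevant sheaf $V^+$ (namely $V$ glued to $\O_{\pp^1}(1)^{\oplus\rk V}$ on the exceptional curves) is \emph{locally free}, so $\Ext^2_{X^+}(V^+,V^+)=0$ and $V^+$ extends unobstructedly to a bundle $\mathcal{V}^+$ on the whole family; it then identifies $\kappa_*\mathcal{V}^+|_{X_1}$ with $\nu_*V$ by matching ranks and Euler characteristics and producing a surjection, the latter reducing via the theorem on formal functions to $h^1(\O_{\pp^1}(2n-1))=0$. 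You would need to supply an argument of comparable substance for your extension-and-gluing step.
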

\begin{proof}
Let \(\mathcal{X} \to \Delta\) denote a family of smooth curves specializing
to \(X_1\) with smooth total space.
Consider the blowup at the nodes of \(X_1\). In this family, 
 the central fiber is the union of $X_0$ together with $n$ copies of $\pp^1$, where each $\pp^1$ is attached at the two preimages of the corresponding node under the normalization map $\nu$. These $\pp^1$s appear with multiplicity 2 in the central fiber. Make a base change of order 2 and normalize the total space to obtain a family $ \mathcal{X}^+ \to \Delta'$.  This is  a semistable family of smooth curves specializing to the union of $X_0$ with $n$ copies of $\pp^1$, where now the central fiber $X^+$ is reduced. Write \(\kappa \colon \mathcal{X}^+ \to \mathcal{X}' \colonequals \mathcal{X} \times_{\Delta}\Delta'\). 
The following diagram illustrates the maps that exist on the central fiber.

\begin{center}
\begin{tikzcd}
& X_0 \arrow[dl, hook] \arrow[dr, "\beta_0", swap] \arrow[drr, "\alpha_0", bend left = 15]\arrow[dd, "\nu", swap] \\
X^+ \arrow[dr, "\kappa|_{X^+}", swap] && Y' \arrow{r}{\alpha^{\et}} & Y \\
& X_1 \arrow{ur}{\beta_1} \arrow[urr, "\alpha_1", swap, bend right = 15]
\end{tikzcd}
\end{center}

Let \(V^+\) denote the vector bundle on \(X^+\) obtained by gluing
the vector bundle \(V\) on \(X_0\) to \(\O_{\pp^1}(1)^{\oplus \rk V}\) on each \(\pp^1\) component via any choice of gluing. (In fact the reader may check that any two choices result in isomorphic bundles.)  Since $V$ is locally free, \(\mathscr{E}{\kern -1.5pt}xt^i(V^+, V^+) = 0\) for all \(i > 0\). Thus, by the local-to-global $\Ext$ spectral sequence, we have that \(\Ext_{X^+}^2(V^+, V^+) =0\). By \cite[Theorem 7.3 (b)]{ha10}, the obstructions to extending the bundle $V^+$ to the whole family lie in $\Ext_{X^+}^2(V^+, V^+)$. Consequently, $V^+$ extends to a vector bundle \(\mathcal{V}^+\) on \(\mathcal{X}^+\). Observe that $\mathcal{V}^+$ has  rank $\rk V$, and by the constancy of the Euler characteristic in flat families, the  slope of  the restriction of $\mathcal{V}^+$ to the fibers is \(\mu(V^+)=  \mu(V) + n\).

 Now we claim that $\kappa_* \mathcal{V}^+|_{X_1} \simeq \nu_* V$. Once we establish this claim, we obtain that $\nu_* V$ is the limit of vector bundles of the same rank and slope $\mu(V) + n$ on the smooth fibers. 
 
Let \((\rk, \chi)(F)\) denote the rank and Euler characteristic of a sheaf \(F\), and  write \(\mathcal{X}'_t\) and \(\mathcal{X}^+_t\) for general fibers of their respective families.  We first show that \((\rk, \chi)(\kappa_* \mathcal{V}^+|_{X_1}) = (\rk, \chi)(\nu_*V)\).
By the constancy of the rank and the Euler characteristic in flat families, and the fact that \(\kappa\) is an isomorphism away from the central fiber, we have

\[(\rk, \chi)(\kappa_* \mathcal{V}^+|_{X_1}) = (\rk, \chi)(\kappa_* \mathcal{V}^+|_{\mathcal{X}'_t}) = (\rk, \chi)(\mathcal{V}^+|_{\mathcal{X}^+_t}) = (\rk, \chi)(V^+). \]
Furthermore, by considering the exact sequence for restriction to \(X_0\)
\[0 \to \bigoplus_{i=1}^n \O_{\pp^1}(-1)^{\oplus \rk(V)} \to V^+ \to V^+|_{X_0} = V \to 0,\]
we see that 
\((\rk, \chi)(V^+) = (\rk, \chi)(V)\).  Finally, \((\rk, \chi)(V) = (\rk, \chi)(\nu_*V)\), which proves that \((\rk, \chi)(\kappa_* \mathcal{V}^+|_{X_1}) = (\rk, \chi)(\nu_*V)\).

Hence, it suffices to construct a surjective map between  $\kappa_* \mathcal{V}^+|_{X_1}$ and $\nu_* V$.
Consider the exact sequence on \(\mathcal{X}^+\)
\[0 \to \mathcal{V}^+(-X_0) \to \mathcal{V}^+ \to \mathcal{V}^+|_{X_0} \to 0.\]
Pushing forward under \(\kappa\), we obtain
\[0 \to \kappa_*\mathcal{V}^+(-X_0) \to \kappa_*\mathcal{V}^+ \to \kappa_*(\mathcal{V}^+|_{X_0}) \to R^1\kappa_*\mathcal{V}^+(-X_0) \to \cdots\]
Observe that  \(\kappa_*(\mathcal{V}^+|_{X_0}) \simeq \nu_*V\), and that the map \(\kappa_*\mathcal{V}^+ \to \kappa_*(\mathcal{V}^+|_{X_0}) \simeq \nu_*V\) factors through \((\kappa_*\mathcal{V}^+)|_{X_1}\).  Hence it suffices to show that \(R^1\kappa_*\mathcal{V}^+(-X_0) = 0\). 

Since \(\kappa\) is an isomorphism away from the nodes of the \(X_1\), the sheaf \(R^1\kappa_*\mathcal{V}^+(-X_0)\) is supported on the nodes of \(X_1\).  It therefore suffices to show that its completion at every node \(p\) of \(X_1\) vanishes.  For this, we use the theorem on formal functions, which states that
\[R^1\kappa_*\mathcal{V}^+(-X_0)^\wedge_p \simeq \varprojlim_{n} H^1(\mathcal{V}^+(-X_0)|_{n \cdot\pp^1}),\]
where \(\pp^1 = \kappa^{-1}(p)\) is a Cartier divisor on \(\mathcal{X}^+\).
It thus suffices to show that \(H^1(\mathcal{V}^+(-X_0)|_{n \cdot\pp^1})=0\) for all \(n\).  For this, we use induction on \(n\), with base case \(n=0\), which is clear.  For the inductive step, we use the exact sequence for restriction to \(n \cdot \pp^1\)
\[0 \to \mathcal{V}^+(-X_0 - n \cdot\pp^1)|_{\pp^1} \to \mathcal{V}^+(-X_0)|_{(n+1) \cdot \pp^1} \to \mathcal{V}^+(-X_0)|_{n \cdot \pp^1}\to 0.\]
Since \(\mathcal{V}^+(-X_0 - n \cdot\pp^1)|_{\pp^1} \simeq \O_{\pp^1}(2n-1)^{\oplus \rk V}\), which has vanishing \(h^1\), we conclude by induction that the middle term has vanishing \(h^1\).
%  To see the claimed isomorphism, we use the  natural base change map $$\psi \colon \kappa_* \mathcal{V}^+|_{X_1} \to (\kappa|_{\kappa^{-1}(X_1)})_* V^+.$$ First, we claim that $(\kappa|_{\kappa^{-1}(X_1}))_* V^+ \simeq \nu_* V$.  To see this claim, use the exact sequence
%  $$0 \to \bigoplus_{i=1}^n \O_{\pp^1}(-1)^{\oplus \rk(V)} \to V^+ \to V^+|_{X_0} = V \to 0.$$ Apply $\kappa_*$ to this sequence obtain
%  $$0 \to \kappa_*\bigoplus_{i=1}^n \O_{\pp^1}(-1)^{\oplus \rk(V)} \to \kappa_*  V^+ \to \kappa_* V = \nu_* V \to R^1 \kappa_*\bigoplus_{i=1}^n \O_{\pp^1}(-1)^{\oplus \rk(V)} \to \cdots$$
%  Since the outer two terms vanish, we get $(\kappa|_{\kappa^{-1}(X_1}))_* V^+ \simeq \nu_* V$. 
% Next, we show that the map $\psi$ is an isomorphism. Away from the nodes of $X_1$, the map $\kappa$ is an isomorphism and therefore so is $\psi$. To check that it is also an isomorphism at the nodes, we use the formal function theorem. Let $p$ denote a node of $X_1$ and let $C$ denote the $\pp^1$ over $p$.
%  We have that $$\varprojlim_n \kappa_* \mathcal{V}|_{np} \simeq \varprojlim_n H^0(\mathcal{V}|_{nC})$$
\end{proof}

\begin{proof}[{Proof of Theorem \ref{thm:gen}}]
Let $\sH$ be an irreducible component of the Hurwitz space $\sH_{r, g}(Y)$.  Assume that the corresponding  covers have primitive-\'etale factorization
\[X \to Y' \xrightarrow{\alpha^\et} Y.\]
Let \(\beta_0 \colon X_0 \to Y'\) be the cyclic \'etale cover constructed above and let
$$\alpha_1 \colon X_1 \xrightarrow{\beta_1} Y' \xrightarrow{\alpha^\et} Y$$ be the cover constructed above by gluing points in the fibers of \(\beta_0 \colon X_0 \to Y'\).

Let
\(V\) be a general vector bundle on \(X_0\) of arbitrary degree and rank. By \cite[Proposition 4.1]{b00} (see \S \ref{sec-etale}), the pushforward ${\alpha_0}_*V$ is semistable if $h=1$ and stable if $h \geq 2$. Since $$\alpha_0 \ = \ \alpha^{\et} \circ \beta_0 \ = \ \alpha^{\et} \circ \beta_1 \circ \nu \ = \ \alpha_1 \circ \nu,$$ we conclude that ${\alpha_1}_* \nu_* V$ is semistable if $h=1$ and stable if $h \geq 2$. 

By Proposition~\ref{pushforward-is-limit},
the pushforward \(\nu_*V\) on \(X_1\) is a limit of vector bundles
of arbitrary degree and rank on a smoothing \(X \to Y' \to Y\). The theorem follows by the openness of (semi)stability.
\end{proof}

\bibliographystyle{plain}

\end{document}